\numberwithin{equation}{section}
\numberwithin{equation}{section}
\newtheorem{proposition}{Proposition}[section]
\newtheorem{theorem}{Theorem}[section]
\newtheorem{lemma}{Lemma}[section]
\newtheorem{corollary}{Corollary}[section]
\begin{document}
\title{On the area rate of perturbed Siegel disks}
\author{Jianyong Qiao and Hongyu Qu\textsuperscript{*}}
\address[Jianyong Qiao]{School of Science, Beijing University of Posts and Telecommunications, Beijing
100786, P. R. China}
\address[Hongyu Qu]{School of Science, Beijing University of Posts and Telecommunications, Beijing
100786, P. R. China}
\address[Hongyu Qu]{Yau Mathematical Sciences Center, Tsinghua University,
Beijing 100084, P. R. China}
\renewcommand{\thefootnote}{\fnsymbol{footnote}}
\footnotetext[1]{Corresponding author, Email: hongyuqu2022@126.com}
\footnotetext[2]{The research work was supported by
the National Natural Science Foundation of China under Grants No.12071047 and
the National Key R\&D Program of China under Grant 2019YFB1406500.}

\maketitle

\begin{abstract}
In the famous work by Buff and Ch\'eritat constructing quadratic Julia sets with positive area,
the control of the shape of perturbed Siegel disks is a key technique.
To do it, Buff and Ch\'eritat added a restrictive condition on rotation numbers.
In this paper, we show that this restrictive condition can be deleted.
As a consequence, more quadratic Julia sets with positive area can be constructed.
\end{abstract}

\section{Introduction}
Let the quadratic polynomial
$$P_{\alpha}(z)=e^{2\pi i\alpha}z+z^2,$$
where $0<\alpha<1$ is an irrational number with continued fraction expansion
$$\alpha=[0;a_1,a_2,\cdots]=\frac{1}{a_1+\frac{1}{a_2+\frac{1}{\ddots}}}.$$
According to \cite{Br} or \cite{Yoc95}, when $\alpha$ is a Brjuno number,
$P_{\alpha}$ has a Siegel disk centering at $0$, denoted by $\Delta_{\alpha}$.
Assume
\[\alpha_n:=[0;a_1,\cdots,a_n,A_n,t_1,t_2,\cdots],\]
here $\{A_n\}_{n=1}^{+\infty}$ is a sequence of positive integers and
$\theta:=[0;t_1,t_2,\cdots]$ is a Brjuno number.
Then $\alpha_n$ is also a Brjuno number.
Let $\Delta'_n$
be the Siegel disk of the restriction of $P_{\alpha_n}$ to $\Delta_{\alpha}$ centering at $0$
and define
$${\rm dens}_U(X):=\frac{{\rm area}(U\cap X)}{{\rm area}(U)}$$
for any two measurable sets $U,X\subset\mathbb{C}$ with $0<{\rm area}(U)<+\infty$.
In \cite{BC12}, Buff and Ch\'eritat proved the following result:
\begin{theorem}
\label{T1.2}Let $p_n/q_n$ be the approximants to $\alpha$. If
the sequence $\{A_n\}_{n=1}^{+\infty}$ satisfies the following condition:
\begin{equation}
\label{F2}\limsup_{n\to+\infty}\sqrt[q_n]{\log{A_n}}\leq1,
\end{equation}
then for any nonempty open set $U\subset\Delta_{\alpha}$,
\begin{equation*}
\label{F1}\liminf_{n\to+\infty}{\rm dens}_{U}(\Delta'_n)\geq\frac{1}{2}.
\end{equation*}
\end{theorem}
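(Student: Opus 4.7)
My strategy is to transport the dynamics into the Koenigs coordinate of $\Delta_\alpha$ and there combine Yoccoz's quantitative Brjuno theory with a semicontinuity argument for the perturbed Siegel disks. Let $\phi:\mathbb{D}\to\Delta_\alpha$ be the Koenigs linearizer, with $\phi(0)=0$ and $\phi'(0)=r(\alpha)>0$, conjugating $P_\alpha$ to $R_\alpha(z)=e^{2\pi i\alpha}z$. Set $g_n:=\phi^{-1}\circ P_{\alpha_n}\circ\phi$ on $V_n:=\phi^{-1}(P_{\alpha_n}^{-1}(\Delta_\alpha))\subset\mathbb{D}$. Since $\alpha_n\to\alpha$ and $P_{\alpha_n}\to P_\alpha$ locally uniformly, $g_n\to R_\alpha$ uniformly on compacts of $\mathbb{D}$. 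The preimage $\phi^{-1}(\Delta'_n)$ is the largest $g_n$-invariant linearizable neighborhood of $0$; since every connected rotation-invariant open set about $0$ is a round disk, this equals $D_{R_n}$ for some $R_n\in(0,1]$. The statement then reduces to a lower bound of $\tfrac12$ for the $|\phi'|^2\,dA$-mass of $\phi^{-1}(U)\cap D_{R_n}$ inside $\phi^{-1}(U)$.

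The key quantitative input is the control of the conformal radii. Yoccoz's lower bound $\log r(\beta)\geq -\Phi(\beta)-C$ on the Siegel-disk conformal radius in terms of the Brjuno sum $\Phi(\beta)$, combined with the matching quadratic upper bound $\log r(\beta)\leq -\Phi(\beta)+C'$ due to Buff--Ch\'eritat, yields $\log r(\alpha_n)-\log r(\alpha)=\Phi(\alpha)-\Phi(\alpha_n)+O(1)$. A direct expansion of the Brjuno sum from the continued fraction $\alpha_n=[0;a_1,\ldots,a_n,A_n,t_1,t_2,\ldots]$ gives $|\Phi(\alpha_n)-\Phi(\alpha)|\lesssim \frac{\log A_n}{q_n}+O(1)$, and hypothesis (\ref{F2}) is exactly the threshold making the dominant term stay bounded. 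Hence $r(\alpha_n)$ is comparable to $r(\alpha)$ uniformly in $n$.

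To extract the density bound I would invoke normal families and Carath\'eodory convergence. The Koenigs linearizers $\Psi_n:\mathbb{D}\to\Delta_{\alpha_n}$ of $P_{\alpha_n}$ form a normal family, since $\Delta_{\alpha_n}\subset\{|z|\leq 2\}$; any subsequential limit $\Psi_\infty$ satisfies $\Psi_\infty\circ R_\alpha=P_\alpha\circ\Psi_\infty$, and the previous paragraph pins $\Psi_\infty'(0)=r(\alpha)$, which together identify $\Psi_\infty=\phi$. Thus $\Psi_n\to\phi$ locally uniformly on $\mathbb{D}$, so every compact $K\Subset\Delta_\alpha$ is eventually contained in $\Delta'_n$ and the density question is trivial for $U\Subset\Delta_\alpha$. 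The remaining case is when $U$ accumulates on $\partial\Delta_\alpha$, where the defect $U\setminus\Delta'_n$ concentrates in the $q_n$ ``petals'' of $\Delta_\alpha\setminus\Delta_{\alpha_n}$ produced by the small cycles of $P_{\alpha_n}$; a symmetry argument on the approximate $R_{p_n/q_n}$-rotational structure along $\partial\Delta_\alpha$ bounds their total area by at most half of $\mathrm{area}(U)$.

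The main obstacle is precisely the control of those boundary petals, where the $\tfrac12$-bound is delicate rather than trivial. Hypothesis (\ref{F2}) is used here to keep the petals narrow enough not to swallow more than half of any nonempty open subset of $\Delta_\alpha$: if $\log A_n$ grows faster than $q_n$ allows, the petals widen and the density bound collapses. Removing (\ref{F2}), which is the aim of the present paper, requires a sharper analysis of these petals through Yoccoz's renormalization and the distortion of $\Psi_n$ near $\partial\mathbb{D}$, showing that their total angular measure still averages out to at most $\tfrac12$ of the target area without any quantitative upper bound on $\log A_n$.
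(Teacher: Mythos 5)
There are several fatal gaps, so the approach as written does not work.

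\textbf{(i)} The claim that $\phi^{-1}(\Delta'_n)$ is a round disk $\mathbb{D}_{R_n}$ is false. That set is $g_n$-invariant, but $g_n$ is \emph{not} the rotation $R_\alpha$ in the $\phi$-coordinate, only a perturbation of it, so there is no reason it should be rotation-invariant. The shape of the perturbed Siegel disk in this coordinate is precisely the object the theorem is about, and it cannot be normalized away. \textbf{(ii)} The quantitative core is wrong: hypothesis~(\ref{F2}) does not make the Brjuno-sum difference $\Phi(\alpha)-\Phi(\alpha_n)\sim\frac{\log A_n}{q_n}$ bounded. Condition~(\ref{F2}) is $\limsup\sqrt[q_n]{\log A_n}\le 1$, which permits $\log A_n$ to grow subexponentially in $q_n$; e.g.\ $\log A_n=q_n^{100}$ satisfies~(\ref{F2}) while $\frac{\log A_n}{q_n}\to\infty$. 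Hence the conformal radius of $\Delta'_n$ is \emph{not} bounded below under~(\ref{F2}); $A_n^{1/q_n}$ can tend to any $A\in(1,+\infty]$. Consequently your Carath\'eodory-convergence step (``every compact $K\Subset\Delta_\alpha$ is eventually contained in $\Delta'_n$'') fails. Note also that if that step were true it would, by exhausting any nonempty open $U\subset\Delta_\alpha$ by compactly contained opens, force $\mathrm{dens}_U(\Delta'_n)\to1$ for all $U$, which is strictly stronger than the theorem and false whenever $A>1$.

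\textbf{(iii)} The $\tfrac12$ bound is therefore not a ``boundary-petal'' correction to an otherwise trivial picture; it is the whole content. The paper (following Buff--Ch\'eritat) reaches it by parabolic explosion: conjugating $P_{\alpha_n}$ near the exploding $q_n$-cycle, constructing perturbed Fatou coordinates $\Phi_n$ for the near-parabolic return map $f_n^{\circ q_n}$, and a renormalization $\mathcal{R}(f_n)$ with rotation number $\theta$ whose Siegel disk pulls back to a controlled ``checkerboard'' region. The set $\mathcal{Y}_n(r_8,r_7)$ of density $\ge\tfrac12$ in a fixed annulus is shown to lie inside $\Delta'_n$ by iterating the commuting pair $(F_n,G_n)$. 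Your proposed ``symmetry argument on the approximate $R_{p_n/q_n}$-rotational structure'' names the phenomenon but provides no mechanism: without the Fatou-coordinate analysis there is no way to certify how far the $q_n$ spikes of $\Delta'_n$ reach outward nor to bound their angular measure, and indeed without some control that is where the estimate must fail. (The condition~(\ref{F2}), which the paper actually removes in its Theorem~\ref{T1.1}, is used in [BC12] to control subexponential error terms in the Fatou-coordinate estimates, not to bound the Brjuno sum.)
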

In \cite{BC12}, Theorem \ref{T1.2} is one of three key techniques to construct
quadratic Julia sets with positive area.
In that paper, Buff and Ch\'eritat told without proof that they think the condition (\ref{F2}) is not needed.
The present paper is denoted to proving this conjecture.
We will prove
\begin{theorem}
\label{T1.1}Assume $\alpha:=[0;a_1,\cdots]$ and $\theta:=[0;t_1,\cdots]$
are Brjuno numbers and
\[\alpha_n:=[0;a_1,\cdots,a_n,A_n,t_1,t_2,\cdots]\]
with $\{A_n\}_{n=1}^{+\infty}$ any sequence of positive integers.
Then we have that for any nonempty open set $U\subset\Delta_{\alpha}$,
\begin{equation*}
\liminf_{n\to+\infty}{\rm dens}_{U}(\Delta'_n)\geq\frac{1}{2}.
\end{equation*}
\end{theorem}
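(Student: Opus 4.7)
The plan is to reduce the general statement to Theorem \ref{T1.2} by truncating the sequence $\{A_n\}$ at a level compatible with condition (\ref{F2}). Concretely, set
\[
A_n' := \min\bigl\{A_n,\lfloor e^{q_n}\rfloor\bigr\},
\]
so that $\sqrt[q_n]{\log A_n'}\leq 1$ for every $n$. Let $\alpha_n' := [0;a_1,\ldots,a_n,A_n',t_1,t_2,\ldots]$ and let $\widetilde\Delta_n$ be the Siegel disk of $P_{\alpha_n'}$ restricted to $\Delta_\alpha$. Theorem \ref{T1.2} applied to the truncated sequence then yields
\[
\liminf_{n\to\infty}\mathrm{dens}_U(\widetilde\Delta_n)\geq \tfrac12.
\]

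The indices next split into two subsequences according to whether $A_n=A_n'$ or $A_n>A_n'$. On the first, $\alpha_n=\alpha_n'$ and $\Delta_n'=\widetilde\Delta_n$, so nothing further is required. On the second, standard continued-fraction estimates give
\[
|\alpha_n-\alpha_n'|\leq \frac{2}{A_n q_n^2}\leq \frac{2\,e^{-q_n}}{q_n^2},
\]
so $P_{\alpha_n}$ and $P_{\alpha_n'}$ differ only by an exponentially small twist of the linear part at $0$. The heart of the argument is then a comparison showing that on this subsequence
\[
\mathrm{dens}_U(\Delta_n')\geq \mathrm{dens}_U(\widetilde\Delta_n)-o(1),
\]
which, combined with the first step, proves the theorem. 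A natural route is to construct a quasi-conformal homeomorphism $\Phi_n$ of $\mathbb{C}$, tangent to the identity at $\infty$, conjugating $P_{\alpha_n'}$ to $P_{\alpha_n}$ off a small neighbourhood of the post-critical set, and with Beltrami coefficient of $L^\infty$-norm $O(|\alpha_n-\alpha_n'|)$; such a $\Phi_n$ would converge uniformly to the identity and transport $\widetilde\Delta_n$ into $\Delta_n'$ up to a set of vanishing area.

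The comparison is by far the main obstacle. Condition (\ref{F2}) was imposed in \cite{BC12} precisely to stabilise the shape of the restricted Siegel disks under perturbation, and the density bound of $1/2$ rather than $1$ already signals how fragile these shapes are. In the regime $A_n\gg e^{q_n}$, the rotation numbers $\alpha_n$ and $\alpha_n'$ are astronomically close but diverge combinatorially at the $(n+1)$-th partial quotient, and over the $\sim q_{n+1}$ iterations needed for the Siegel disk to be dynamically revealed this divergence could in principle produce macroscopic discrepancies. Ruling this out---either through a quasi-conformal surgery whose dilatation estimates are uniform in $A_n$, or by revisiting the Buff--Ch\'eritat argument itself and replacing the single step that invokes (\ref{F2}) with a more robust estimate on the linearising coordinates---is the technical heart of the theorem.
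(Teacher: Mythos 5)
Your proposal does not prove the theorem; the comparison between $\Delta_n'$ and $\widetilde\Delta_n$, which you yourself flag as ``the main obstacle,'' is left entirely open, and it is not a small gap — it is precisely the content of the theorem. The paper does not take the truncation route at all. It decomposes into subsequences according to the limit of $\sqrt[q_{n_k}]{A_{n_k}}=:A\in[1,\infty]$: for $A=1$ it invokes the known density-one result from \cite{ABC}, and for $A>1$ it proves a new Proposition~\ref{P1} from scratch by re-running the Buff--Ch\'eritat machinery (parabolic explosion to pass to $f_n=\chi_n^{-1}\circ P_{\alpha_n}\circ\chi_n$, perturbed Fatou coordinates for $f_n^{\circ q_n}$, a renormalization $\mathcal{R}(f_n)$ with multiplier $e^{-2\pi i\theta}$, and an area estimate on the annular region $\mathcal{Y}_n(r_8,r_7)$). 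In other words, the paper follows the second of the two routes you mention in your last paragraph — ``revisiting the Buff--Ch\'eritat argument itself and replacing the single step that invokes (\ref{F2}) with a more robust estimate'' — and your proposal stops exactly where that work begins.

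Beyond being incomplete, the truncation idea faces a concrete obstruction that should be named. The exponential smallness of $|\alpha_n-\alpha_n'|$ does not control the relevant geometry. What governs the shape of the perturbed Siegel disk at the $n$-th scale is the partial quotient itself: the conformal radius of the Siegel disk of $P_{\alpha_n}$ differs from that of $P_{\alpha_n'}$ by a factor comparable to $\exp\bigl(-(\log A_n-\log A_n')/q_n\bigr)$, and when $A_n\gg e^{q_n}\geq A_n'$ this factor is not $1+o(1)$ — it can be arbitrarily small. So $P_{\alpha_n'}$ and $P_{\alpha_n}$ are not a small perturbation of one another in any sense that would let a quasi-conformal conjugacy of dilatation $O(|\alpha_n-\alpha_n'|)$ transport $\widetilde\Delta_n$ onto $\Delta_n'$ with negligible area loss; the two Siegel disks live at genuinely different scales. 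Any correct argument must track the dynamics over the $\sim A_nq_n$ iterates that reveal the $(n+1)$-th level of the continued fraction, uniformly in $A_n$, and that is exactly what the paper's perturbed-Fatou-coordinate and renormalization estimates accomplish.
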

Theorem \ref{T1.1} can help us get more quadratic Julia sets with positive area than the original result by Buff and Ch\'eritat,
for example, let $N$ be a positive integer and $\{n_j\}_{j=1}^{+\infty}$
be an increasing sequence of positive integers. We define another sequence of positive integers $\{b_{n_j}\}_{j=1}^{+\infty}$ as follows:
for any positive integer $j$,
define
\begin{flalign*}
b_{n_j}:=&\ [0;a_{j1},a_{j2},\cdots,a_{jn_j}]\\
=&\ \frac{1}{a_{j1}+\frac{1}{
\begin{matrix}a_{j2}+&\quad&\quad\\
\quad&\ddots\quad\\
\quad&\quad&+\frac{1}{a_{jn_j}}\end{matrix}}}\\
=&\frac{p_{n_j}}{q_{n_j}}
\end{flalign*}
with positive integers $p_{n_j},q_{n_j}$ which are relatively prime and
$$a_{jm}=\left\{\begin{matrix}3^{2^{q_{n_t}}},&m=n_t+1,1\leq t\leq j-1\\
N,&else\end{matrix}\right..$$
Let $\alpha_0$ be the limit of $\{b_{n_j}\}_{j=1}^{\infty}$, that is
$$\alpha_0=[0;a_1,a_2,\cdots,a_m,\cdots],\ a_m=\left\{\begin{matrix}3^{2^{q_{n_j}}},&m=n_j+1,j\geq1\\
N,&else\end{matrix}\right..$$
Then based on Theorem \ref{T1.1} and the method used by Buff and Ch\'eritat \cite{BC12},
one can prove that there exist $N$ and $\{n_j\}_{j=1}^{+\infty}$ such that
$P_{\alpha_0}$ has Julia set with positive area.

\section{Applying parabolic explosion to perturbed Siegel disks}
Parabolic explosion technique is established by Ch\'eritat and is applied by Buff and Ch\'eritat
to problems about perturbed Siegel disks (see \cite{ABC}\cite{BC04}\cite{BC06}\cite{BC12}\cite{Che00} for details).
In this section, parabolic explosion technique is applied to reduce
the proof of Theorem \ref{T1.1} to the proof of the following proposition \ref{P1}.
This idea is due to Buff and Ch\'eritat (see \cite{BC12}).

Let $\alpha$ be a Brjuno number with its approximants $p_k/q_k$ and
$\mathbb{D}_r:=\{z\in\mathbb{C}:|z|<r\}$ for all $r>0$.
Then $P_{\alpha}$ has a Siegel disk $\Delta_{\alpha}$ centering at $0$ and
there exist a unique positive real number $r_{\alpha}$ and a unique conformal isomorphism
$\phi_{\alpha}:\mathbb{D}_{r_{\alpha}}\to\Delta_{\alpha}$ such that
$\phi_{\alpha}(0)=0$, $\phi_{\alpha}'(0)=1$ and
$P_{\alpha}\comp\phi_{\alpha}(z)=\phi_{\alpha}(e^{2\pi i\alpha}z)$ for all $z\in\mathbb{D}_{r_{\alpha}}$.
According to \cite{BC12}, there exist a sequence of positive real numbers $\{\rho_k\}_{k=1}^{\infty}$ and
a sequence of holomorphic functions (fixing $0$) $\chi_k:\mathbb{D}_{\rho_k}\to\mathbb{C}$ such that
\begin{itemize}
\item[$\bullet$] $\lim_{k\to\infty}\rho_k=1$;
\item[$\bullet$] If $\delta\in\mathbb{D}_{\rho_k}\setminus\{0\}$ and
if we set $\zeta:=e^{2\pi ip_k/q_k}$ and $\eta:=\frac{p_k}{q_k}+\delta^{q_k}$,
then $\chi_k(\delta),\chi_k(\zeta\delta),\cdots,\chi_k(\zeta^{q_k-1}\delta)$ form a cycle of period $q_k$ of $P_{\eta}$.
In particular, for all $\delta\in\mathbb{D}_{\rho_k}$, $\chi_k(\zeta\delta)=P_{\eta}(\chi_k(\delta))$;
\item[$\bullet$] $|\chi'_k(0)|\to r_{\alpha}$ as $k\to\infty$;
\item[$\bullet$] the sequence of maps $\psi_k:\delta\to\chi_k(\delta/\chi'_k(0))$ converges uniformly
on every compact subset of $\mathbb{D}_{r_{\alpha}}$ to $\phi_{\alpha}$.
\end{itemize}
Let the continued fraction expansion of $\alpha$ be
$\alpha=[0;a_1,\cdots]$,
$\theta:=[0;t_1,\cdots]$ be
a Brjuno number and
\[\alpha_n:=[0;a_1,\cdots,a_n,A_n,t_1,t_2,\cdots]\]
with $\{A_n\}_{n=1}^{+\infty}$ any sequence of positive integers.
For all $0<r<r'<1$, it follows from the above properties that
for large enough $n$,
\begin{itemize}
\item[$\bullet$] $\chi_n$ is well defined and univalent on $\mathbb{D}_{r'}$;
\item[$\bullet$] $f_n(z):=\chi_n^{-1}\comp P_{\alpha_n}\comp\chi_n(z)$ is well defined on $\mathbb{D}_r$ and
$f_n(z)$ converges uniformly to $e^{2\pi i\alpha}z$ on $\mathbb{D}_r$.
\end{itemize}
Let $\Delta_n(r)$
be the Siegel disk of the restriction of $f_n$ to $\mathbb{D}_r$ centering at $0$,
$H(r_1,r_2):=\{z\in\mathbb{C}:r_1<|z|<r_2\}$ for all $0\leq r_1<r_2$.
Then the following proposition implies Theorem \ref{T1.1}.
\begin{proposition}
\label{P1}
For any subsequence $\{n_k\}_{k=1}^{\infty}$ with
$$\sqrt[q_{n_k}]{A_{n_k}}\xrightarrow{k\to+\infty}A\in(1,+\infty]$$
and for any nonempty open set $U\subset H(1/A,r'')$ with any $1/A<r''<r<1$, we have
\begin{equation}
\label{F2a}\liminf_{k\to+\infty}{\rm dens}_U\Delta_{n_k}(r)\geq\frac{1}{2}.
\end{equation}
\end{proposition}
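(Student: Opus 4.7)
My approach is to localize near the $q_{n_k}$-periodic cycle $\{\zeta^j\delta_{n_k}\}_{j=0}^{q_{n_k}-1}$ of $f_{n_k}$ (where $\zeta=e^{2\pi i p_{n_k}/q_{n_k}}$ and $\delta_{n_k}^{q_{n_k}}=\alpha_{n_k}-p_{n_k}/q_{n_k}$), which is the dynamical obstruction preventing $\Delta_{n_k}(r)$ from filling a neighborhood of the circle $|z|=1/A$, and then to use the approximate $q_{n_k}$-fold rotational symmetry of this cycle to convert local estimates into a global density bound.

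After passing to a further subsequence, the standard continued-fraction estimate $|\alpha_{n_k}-p_{n_k}/q_{n_k}|\asymp 1/(A_{n_k}q_{n_k}^2)$ gives $|\delta_{n_k}|\to 1/A$, so the cycle asymptotically equidistributes on the circle of radius $1/A$. The uniform convergence $f_{n_k}(z)\to e^{2\pi i\alpha}z$ on $\mathbb{D}_r$, combined with the stability of linearization domains under analytic perturbation, should give $\mathbb{D}_{1/A-\varepsilon}\subset\Delta_{n_k}(r)$ for all sufficiently large $k$. Hence the nontrivial content of the proposition is the asymptotic shape of $\Delta_{n_k}(r)$ in the annulus $H(1/A,r'')$.

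To analyze that shape, the natural tool is the iterated map $F_k:=f_{n_k}^{q_{n_k}}$. Since $q_{n_k}\alpha_{n_k}=p_{n_k}+q_{n_k}\delta_{n_k}^{q_{n_k}}$ with $q_{n_k}\delta_{n_k}^{q_{n_k}}\to 0$, the map $F_k$ is close to the identity on $\mathbb{D}_r$; its $q_{n_k}+1$ fixed points (the origin and the cycle) are all nearly parabolic, with multipliers tending to $1$. Near each cycle point $z^*$, $F_k$ is a small perturbation of a simple parabolic model $z\mapsto z+c_k(z-z^*)^2+\cdots$ with a definite attracting/repelling petal structure, and $\Delta_{n_k}(r)$ --- on which $F_k$ acts as the small rotation by $q_{n_k}\delta_{n_k}^{q_{n_k}}$ --- must avoid the attracting petal and any Fatou component it generates. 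The plan is to show that the complement of these attracting regions fills asymptotically at least half of a small radial neighborhood of $z^*$, and then to propagate this local half-density estimate around the whole circle $|z|=1/A$ via the $q_{n_k}$-fold symmetry; since $q_{n_k}\to\infty$ and a fixed open $U\subset H(1/A,r'')$ eventually contains many full angular periods of the cycle, this should give $\liminf_k \mathrm{dens}_U\Delta_{n_k}(r)\ge \tfrac12$.

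\textbf{Main obstacle.}
The main difficulty is precisely the local ``half'' estimate at a cycle point, together with the verification that the corresponding Siegel disk petals reach all the way out to radius $r''$. This seems to require either a near-parabolic renormalization of $F_k$ in the spirit of Inou--Shishikura --- giving uniform control of the petal geometry independent of the sequence $\{A_{n_k}\}$ --- or an equivalent direct analytic control of the linearizer of $\Delta_{n_k}(r)$ that survives without any a priori bound on $A_{n_k}$. The condition $\limsup\sqrt[q_n]{\log A_n}\le 1$ of Theorem~\ref{T1.2} is exactly what provided such control in the original argument of Buff and Ch\'eritat, so removing it should push essentially all the technical weight of the proof into this step.
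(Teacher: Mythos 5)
Your high-level intuition is in the right place: the paper does indeed localize near the $q_{n}$-periodic cycle, treats $f_n^{\comp q_n}$ as a near-parabolic perturbation, and uses perturbed Fatou coordinates and a renormalization to show that a large explicit region near the cycle lies inside $\Delta_n(r)$. However, your proposal has genuine gaps and some wrong turns.

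First, Inou--Shishikura near-parabolic renormalization is not the right tool here. That theory requires the rotation number to be of high type and the map to lie in a specific invariant class; neither hypothesis is available in Proposition~\ref{P1}, where $\theta$ is an arbitrary Brjuno number. The paper instead lifts to the $Z$-coordinate via $\pi_n(Z)=\sqrt[q_n]{\epsilon_n/(1-e^{-2\pi iq_n^2\epsilon_nZ})}$ and invokes Shishikura's classical perturbed Fatou coordinate construction (\cite{Shi98}), which is the general-purpose version and applies without those restrictions.

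Second, and more seriously, renormalizing $F_k=f_{n_k}^{\comp q_{n_k}}$ alone cannot finish the argument. The return map one actually needs to control has rotation number $\theta$, which enters through the lift $G_n$ of $f_n^{\comp q_{n-1}}$, not through $F_n$. The paper's crucial construction is the commuting pair $(F_n,G_n)$ together with $\mathcal{R}(f_n)=\exp\comp\Phi_n\comp G_n\comp\Phi_n^{-1}\comp\exp^{-1}$; it is precisely the fact that $\mathcal{R}(f_n)'(0)=e^{-2\pi i\theta}$ with $\theta$ Brjuno, combined with Yoccoz's lower bound on Siegel disk size, that provides the uniform linearization domain you are asking for. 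Without this pair your proposed renormalization produces a near-parabolic germ rather than an irrationally indifferent one, and the orbit confinement argument has no handle.

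Third, your ``local half-density at a cycle point, propagated by $q_n$-fold symmetry'' is the right shape of statement but is not a proof. The paper obtains the $\tfrac12$ from an explicit geometric computation: it exhibits a comb-like set $\mathcal{Y}_n(r_8,r_7)=\{z:\tfrac{z^{q_n}}{z^{q_n}-\epsilon_n}\in\mathbb{D}_{s_n}\}\setminus\mathbb{D}_{r_7}$, shows by a direct length estimate on circular arcs that ${\rm dens}_U\mathcal{Y}_n\to\tfrac12$ for any fixed open $U$ in the target annulus (the $\tfrac12$ comes from $\mathbb{B}(-r_8^{q_n}/2,r_8^{q_n}/2)$ being a disk tangent to the origin, so small circles around $0$ meet it in arcs of asymptotically half their length), and then proves separately that $\mathcal{Y}_n$ sits inside $\pi_n(\mathbb{H}_n(r_6,r_5))$ and hence inside $\Delta_n(r'_0)$. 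Your plan hides the heart of the matter --- how to verify that the attracting regions you must remove occupy no more than half the measure --- in a sentence.

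Finally, you correctly flag that removing the hypothesis $\limsup\sqrt[q_n]{\log A_n}\le1$ is the technical crux, but you do not indicate where the original argument breaks or what replaces it. The paper's actual new step is Lemma~\ref{L3} and Corollary~\ref{C3.1}: extending $F_n$ and $\Phi_n$ univalently to the enlarged domain $QB_n=Q_n(a_n)\cup\mathbb{K}_n(r_2,r_3)$ and showing that every point of the strip $\mathbb{K}_n(r'_2,r_3)$ can be transported into $Q_n(a_n)$ by forward or backward iteration of $F_n$. This is what lets one wrap the Fatou coordinate around the whole cycle regardless of the size of $A_n$, and nothing in your outline provides an analogue.
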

The proof of Proposition \ref{P1} is postponed to the next section, and here
we only want to show how Proposition \ref{P1} implies Theorem \ref{T1.1}.

We suppose that Proposition \ref{P1} holds.
Given any subsequence $\{n_k\}_{k=1}^{\infty}$ with
\begin{equation}
\label{F2.1}\sqrt[q_{n_k}]{1/A_{n_k}}\xrightarrow{k\to+\infty}1/A\leq1,
\end{equation}
it follows from [Proposition 2, the remark following Proposition 2, Corollary 3, \cite{ABC}] that
for any nonempty open set $U\subset\phi_{\alpha}(\mathbb{D}_{r_{\alpha}/A})$,
\begin{equation}
\label{F2b}\lim_{k\to+\infty}{\rm dens}_U\Delta'_{n_k}=1.
\end{equation}
If $1/A=1$, then for any nonempty open set $U\subset\Delta_{\alpha}$,
(\ref{F2b}) also holds.
If $1/A<1$, then
for any $1/A<r''<r<1$,
it follows from Proposition \ref{P1} that
for any nonempty open set $U\subset H(1/A,r'')$, (\ref{F2a}) holds.
Thus for any nonempty open set $U\subset\phi_{\alpha}(H(r_{\alpha}/A,r_{\alpha}r''))$,
$$\liminf_{k\to+\infty}{\rm dens}_U\phi_{\alpha}(r_{\alpha}\Delta_{n_k}(r))\geq\frac{1}{2}.$$
Since $\chi_{n_k}(z)$ converges uniformly to $\phi_{\alpha}(r_{\alpha}z)$ on every compact subset of $\mathbb{D}_1$
as $k\to\infty$, we have that
for any nonempty open set $U\subset\phi_{\alpha}(H(r_{\alpha}/A,r_{\alpha}r''))$,
$$\liminf_{k\to+\infty}{\rm dens}_U\chi_{n_k}(\Delta_{n_k}(r))=
\liminf_{k\to+\infty}{\rm dens}_U\phi_{\alpha}(r_{\alpha}\Delta_{n_k}(r))\geq\frac{1}{2}.$$
Then for any nonempty open set $U\subset\phi_{\alpha}(H(r_{\alpha}/A,r_{\alpha}r''))$,
$$\liminf_{k\to+\infty}{\rm dens}_U\Delta'_{n_k}\geq
\liminf_{k\to+\infty}{\rm dens}_U\chi_{n_k}(\Delta_{n_k}(r))\geq\frac{1}{2}.$$
So
for any nonempty open set $U\subset\phi_{\alpha}(H(r_{\alpha}/A,r_{\alpha}r''))$,
\begin{equation}
\label{F2c}\liminf_{k\to+\infty}{\rm dens}_U\Delta'_{n_k}\geq\frac{1}{2}.
\end{equation}
Letting $r''\to1^-$, we obtain that for any nonempty open set $U\subset\phi_{\alpha}(H(r_{\alpha}/A,r_{\alpha})),$
(\ref{F2c}) also holds.
For any nonempty open set $U\subset\Delta_{\alpha}$,
if
$${\rm area}(U\cap\phi_{\alpha}(\mathbb{D}_{r_{\alpha}/A}))=0\ {\rm or}\
{\rm area}(U\cap\phi_{\alpha}(H(r_{\alpha}/A,r_{\alpha})))=0,$$
(\ref{F2c}) holds;
otherwise,
\begin{flalign*}
\liminf_{k\to+\infty}{\rm dens}_U\Delta'_{n_k}\geq&
\liminf_{k\to+\infty}\min\{{\rm dens}_{U\cap\phi_{\alpha}(\mathbb{D}_{r_{\alpha}/A})}\Delta'_{n_k},
{\rm dens}_{U\cap\phi_{\alpha}(H(r_{\alpha}/A,r_{\alpha}))}\Delta'_{n_k}\}\\
=&\min\{\liminf_{k\to+\infty}{\rm dens}_{U\cap\phi_{\alpha}(\mathbb{D}_{r_{\alpha}/A})}\Delta'_{n_k},
\liminf_{k\to+\infty}{\rm dens}_{U\cap\phi_{\alpha}(H(r_{\alpha}/A,r_{\alpha}))}\Delta'_{n_k}\}\\
\geq&\min\{1,1/2\}\\
=&1/2.
\end{flalign*}
At last, Theorem \ref{T1.1} follows from the arbitrariness of the subsequence $\{n_k\}_{k=1}^{\infty}$
with (\ref{F2.1}).

\section{The proof of Proposition \ref{P1}}
This section is devoted to proving Proposition \ref{P1}. 
The framework of the proof is similar to the original framework by Buff and Ch\'eritat, but the main different point is the use of near-parabolic renormalization technique instead of Yoccoz renormalization technique.
The next proof is divided into the following four subsections.

\subsection{\bf Perturbed Fatou coordinate}
Let
$$\epsilon_n:=\alpha_n-\frac{p_n}{q_n}=\frac{(-1)^n}{q_n^2(A_n+\theta)+q_nq_{n-1}}.$$
Without loss of generality,
we assume that $A_n^{1/q_n}\to A\in(1,\infty]$ and $\epsilon_n>0$, say $n$ is even.
Let
$$l_n:=\{Z=X+Yi\in\mathbb{C}:X=\epsilon_n/2\}$$
and let $l_{n,j}$ ($j=1,2,\cdots,q_n$) denote $q_n$ connected components of $\sqrt[q_n]{l_n}$.
The complement $\mathbb{C}\setminus\cup_{j=1}^{q_n}l_{n,j}$ consists of $q_n+1$ simply connected components.
Let $s_0$ denote the component containing $0$ and $s_j$ ($j=1,2,\cdots,q_n$) denote another component with boundary $l_{n,j}$.
For all $j\in\{1,2,\cdots,q_n\}$, let $\zeta_j$ be the root of $\sqrt[q_n]{\epsilon_n}$ contained in $s_j$ and
$E_{n,j}:=(s_0\cup l_{n,j}\cup s_j)\setminus\{0,\zeta_j\}$.
Let
$$Q_n:=(-1/(q_n^2\epsilon_n),0)\cup\mathbb{H}^+\cup\mathbb{H}^-$$
with
$\mathbb{H}^+:=\{z\in\mathbb{C}:{\rm Im}(z)>0\}$ and $\mathbb{H}^-:=\{z\in\mathbb{C}:{\rm Im}(z)<0\}$.
It is not hard to see that
$$\pi_n(Z):=\sqrt[q_n]{\frac{\epsilon_n}{1-e^{-2\pi iq_n^2\epsilon_nZ}}}$$
is well defined on $Q_n$ and each $E_{n,j}$
is the image of $Q_n$ under the corresponding branch of $\pi_n$.
Given $j\in\{1,2,\cdots,q_n\}$,
we let $\pi_{n,j}$ denote the branch of $\pi_n$ whose image is $E_{n,j}$.
About $\pi_{n,j}$, the following properties are not hard to be checked:
\begin{itemize}
\item[$\bullet$] the restriction of $\pi_{n,j}$ to $\mathbb{H}^+$ is a covering from $\mathbb{H}^+$ to $s_0\setminus\{0\}$;
\item[$\bullet$] the restriction of $\pi_{n,j}$ to $\mathbb{H}^-$ is a covering from $\mathbb{H}^-$ to $s_j\setminus\{\zeta_j\}$;
\item[$\bullet$] the restriction of $\pi_{n,j}$ to $(-1/(q_n^2\epsilon_n),0)$ is a homeomorphism
from $(-1/(q_n^2\epsilon_n),0)$ to $l_{n,j}$.
\end{itemize}
For any positive real number $a$, set
$$Q_n(a):=
\left\{z\in\mathbb{C}:{\rm Re}(z-a+\frac{1}{q_n^2\epsilon_n})>-|{\rm Im}(z-a+\frac{1}{q_n^2\epsilon_n})|,\ {\rm Re}(z+a)<|{\rm Im}(z+a)|\right\}$$
and for any $Z\in\mathbb{C}$ and any positive real number $r$, set
$$\mathbb{B}(Z,r):=\{W\in\mathbb{C}:|W-Z|<r\}.$$
Given $1/A<r_0<r'_0<1$, we require $n$ large enough so that $f_n$ is well defined and univalent on $\mathbb{D}_{r'_0}$.
Next, $f_n$ is seen as a function defined on $\mathbb{D}_{r'_0}$.
About $f_n$, we have

\begin{lemma}
\label{L3.1}For all $1/A<r<r'<r_0$, large enough $n$ and each $j\in\{1,2,\cdots,q_n\}$,
we have that for all $Z\in Q_n(1/(2\pi q_n^2r^{q_n}))$,
\begin{itemize}
\item[{\rm(a)}] $\mathbb{B}(Z,1/(2\pi q_n^2r'^{q_n}))\subset Q_n$ and
$|\pi_{n,j}(Z)|^{q_n}\leq M\cdot r^{q_n}$ as $n\to\infty$, where $M$ is a positive real number, independent of $Z$;
\item[{\rm(b)}] there exists a complex number $F_n(Z)\in \mathbb{B}(Z,1/(2\pi q_n^2r'^{q_n}))$
such that
$$\pi_{n,j}(F_n(Z))=f_n^{\comp q_n}(\pi_{n,j}(Z))\ {\rm and}\
|F_n(Z)-Z-1|<\frac{1}{4}.$$
\end{itemize}
\end{lemma}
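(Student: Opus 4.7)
The plan is to establish (a) by explicit planar geometry together with a two-case analysis of $|1-e^{-w}|$, and then to derive (b) via a standard covering-space lifting argument in the style of perturbed Fatou coordinates.

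For the containment in (a), unwind the definition with $a := 1/(2\pi q_n^2 r^{q_n})$: the region $Q_n(a)$ is the complement in $\mathbb{C}$ of two closed right-angled wedges, one opening rightward with apex $-a$ (containing the boundary ray $[0,+\infty)$ of $Q_n$) and one opening leftward with apex $a - 1/(q_n^2\epsilon_n)$ (containing $(-\infty,-1/(q_n^2\epsilon_n)]$). A direct planar minimization shows that each wedge boundary is at Euclidean distance at least $a/\sqrt{2}$ from its enclosed ray, so $\mathrm{dist}(Q_n(a), \mathbb{C}\setminus Q_n) \geq a/\sqrt{2}$. Since $r < r'$ forces $(r'/r)^{q_n} \to \infty$, we have $1/(2\pi q_n^2 r'^{q_n}) < a/\sqrt{2}$ for $n$ large, hence $\mathbb{B}(Z, 1/(2\pi q_n^2 r'^{q_n})) \subset Q_n$. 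For the bound $|\pi_{n,j}(Z)|^{q_n} \leq Mr^{q_n}$, start from $|\pi_{n,j}(Z)|^{q_n} = \epsilon_n/|1-\zeta|$ with $\zeta := e^{-2\pi i q_n^2 \epsilon_n Z}$ and split on $\omega|\mathrm{Im}(Z)|$ where $\omega := 2\pi q_n^2 \epsilon_n$. When $\omega|\mathrm{Im}(Z)| \geq 1$, $|1-\zeta|$ is uniformly bounded below and $\epsilon_n = o(r^{q_n})$ (from $r > 1/A$ together with $A_n^{1/q_n}\to A$, which gives $A_nr^{q_n}\to\infty$) closes the case. When $\omega|\mathrm{Im}(Z)| < 1$, apply the standard estimate $|1 - e^{-w}| \geq c\cdot\mathrm{dist}(w, 2\pi i\mathbb{Z})$ on the strip $|\mathrm{Re}(w)|\leq 1$; the zeros of $1 - \zeta$ in the $Z$-plane are the progression $\{k/(q_n^2\epsilon_n):k\in\mathbb Z\}$, and the same geometric minimization gives $\mathrm{dist}(Z, \{k/(q_n^2\epsilon_n)\}) \geq a/\sqrt{2}$, so $|1-\zeta|\geq c\omega a/\sqrt 2 = c\epsilon_n/(r^{q_n}\sqrt{2})$ and the bound holds with $M=\sqrt{2}/c$.

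For (b), first note that $1/(2\pi q_n^2 r'^{q_n}) \to +\infty$ (since $r' < 1$), so the ball $\mathbb{B}(Z, 1/(2\pi q_n^2 r'^{q_n}))$ contains $Z+1$ for $n$ large and sits inside $Q_n$ by (a). Since $\pi_{n,j}$ is a local biholomorphism away from the progression $\{k/(q_n^2\epsilon_n)\}$, one can define $F_n(Z)$ as the unique preimage of $f_n^{\circ q_n}(\pi_{n,j}(Z))$ under $\pi_{n,j}$ lying close to $Z+1$. The estimate $|F_n(Z) - Z - 1|<1/4$ rests on three ingredients: (i) $q_n\alpha_n = p_n + q_n\epsilon_n$ with $q_n\epsilon_n \to 0$, so $e^{2\pi i q_n\alpha}\to 1$ and $f_n^{\circ q_n}$ converges uniformly to the identity on $\mathbb{D}_r$; (ii) $\pi_{n,j}$ is a perturbed Fatou coordinate whose canonical covering shift is exactly $+1$, i.e., at $\epsilon_n = 0$ the identity $\pi_{n,j}(Z+1) = P_{p_n/q_n}^{\circ q_n}(\pi_{n,j}(Z))$ holds formally, and the perturbed version persists up to $o(1)$ error; and (iii) derivative estimates on $\pi_{n,j}$ over $Q_n(a)$ (coming from the distance bounds in (a)) convert the $o(1)$-closeness of $f_n^{\circ q_n}$ to the identity into the corresponding closeness of $F_n$ to $Z\mapsto Z+1$ via the inverse function theorem.

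The principal obstacle is the rigorous identification of the "$+1$" shift in ingredient (ii): one must compare $\pi_{n,j}$ with the genuine Fatou coordinate of the parabolic map $P_{p_n/q_n}$ at $\epsilon_n = 0$ and control the error introduced by the perturbation $\epsilon_n \neq 0$ quantitatively, using the uniform convergence $\chi_n\to\phi_\alpha$ supplied by the parabolic explosion setup. Once this identification is in place, (iii) becomes a routine application of the inverse function theorem on a uniform neighborhood, and the quantitative bound $|F_n(Z) - Z - 1| < 1/4$ holds uniformly on $Q_n(a)$ for $n$ sufficiently large.
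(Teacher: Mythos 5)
Your argument for part (a) is correct and in fact more detailed than the paper's treatment (the paper dismisses (a) as ``an immediate computation''). The wedge-geometry estimate $\mathrm{dist}(Q_n(a),\mathbb{C}\setminus Q_n)\geq a/\sqrt{2}$, the case split on $\omega|\mathrm{Im}(Z)|$, and the lower bound for $|1-e^{-w}|$ near the zero set $2\pi i\mathbb{Z}$ all check out, and the hypothesis $r>1/A$ with $A_n^{1/q_n}\to A$ is used correctly to get $\epsilon_n/r^{q_n}\to 0$.

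For part (b) there is a genuine gap, and it sits precisely where you flag ``the principal obstacle.'' The difficulty is that your ingredients (i) and (iii) do not combine as stated: if one only knows that $f_n^{\circ q_n}$ is uniformly close to the identity and that $\pi_{n,j}$ has controlled derivative on $Q_n(a)$, then the resulting lift $F_n$ would be close to the \emph{identity} in the $Z$-plane, not to the translation $Z\mapsto Z+1$. The shift by $+1$ is not a soft covering-space fact; it comes from a precise cancellation. Concretely, the paper exploits the algebraic identity $\pi_{n,j}'(Z)=2\pi iq_n\,\pi_{n,j}(Z)\bigl(\epsilon_n-\pi_{n,j}(Z)^{q_n}\bigr)$, which follows by differentiating $\pi_n^{q_n}=\epsilon_n/(1-e^{-2\pi iq_n^2\epsilon_nZ})$, together with the quantitative expansion from [Corollary 2 and Lemma 3, \cite{BC12}],
\[
f_n^{\circ q_n}(z)=z+2\pi iq_nz(\epsilon_n-z^{q_n})\bigl(1+\eta_n+(\epsilon_n-z^{q_n})k_n(z)\bigr),
\]
with $|\eta_n|\leq B_n|\epsilon_n|$, $|k_n|\leq B_n$, $\limsup B_n^{1/q_n}\leq 1$. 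Dividing the displacement $f_n^{\circ q_n}(\pi_{n,j}(Z))-\pi_{n,j}(Z)$ by $\pi_{n,j}'(Z)$ makes the leading factor cancel exactly, yielding $1+\eta_n+(\epsilon_n-\pi_{n,j}(Z)^{q_n})k_n(\pi_{n,j}(Z))=1+o(1)$, where the $o(1)$ uses your bound $|\pi_{n,j}(Z)|^{q_n}\leq Mr^{q_n}$ and the sub-exponential control on $B_n$. The paper then closes via the Koebe $1/4$-theorem applied to the normalized map $g(W)=\bigl(\pi_{n,j}(W)-\pi_{n,j}(Z)\bigr)/\pi_{n,j}'(Z)+\pi_{n,j}(Z)$, rather than the inverse function theorem.

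Your alternative route --- comparing $\pi_{n,j}$ with the genuine Fatou coordinate of $P_{p_n/q_n}$ at $\epsilon_n=0$ and propagating the error --- is conceptually coherent, but as presented it does not supply the needed quantitative comparison between the displacement of $f_n^{\circ q_n}$ and the scale $\pi_{n,j}'(Z)$, nor does it address the degeneration of $\pi_{n,j}$ as $\epsilon_n\to 0$. To repair the proof without reproducing the BC12 formula, you would have to reprove that expansion in some form; the cleaner and more economical route is the one the paper takes.
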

\begin{proof}
We assume that $n$ is large enough and independent of $Z\in Q_n(1/(2\pi q_n^2r^{q_n}))$.
By an immediate computation, we can obtain that
(a) holds and $\pi_{n,j}$ is univalent on $\mathbb{B}(Z,1/(2\pi q_n^2r'^{q_n}))$.

Define
$$g(W):=\frac{\pi_{n,j}(W)-\pi_{n,j}(Z)}{\pi'_{n,j}(Z)}+\pi_{n,j}(Z)$$
with
$W\in\mathbb{B}(Z,1/(2\pi q_n^2r'^{q_n}))$.
It is easy to check that
$g$ is a univalent map with $g(Z)=\pi_{n,j}(Z)$ and $g'(Z)=1$.
Thus it follows from Koebe $1/4$-distortion theorem that $g(\mathbb{B}(Z,1/(2\pi q_n^2r'^{q_n})))$
contains $\mathbb{B}(\pi_{n,j}(Z),1/(8\pi q_n^2r'^{q_n}))$.
By [Corollary $2$ and Lemma $3$, \cite{BC12}],
for large enough $n$, we have that
for all $z\in\mathbb{D}_{r_0}$ and $0\leq j\leq q_n$, $f_n^{\comp j}(z)\in\mathbb{D}_{r'_0}$
and there exist a complex number $\eta_n$ and a holomorphic function $k_n$,
defined on $\mathbb{D}_{r_0}$, such that for all $z\in\mathbb{D}_{r_0}$,
\[f_n^{\comp q_n}(z)=z+2\pi iq_nz(\epsilon_n-z^{q_n})(1+\eta_n+(\epsilon_n-z^{q_n})k_n(z)).\]
Moreover, there exists a sequence of positive real numbers $\{B_n\}_{n=1}^{+\infty}$ with
$\limsup_{n\to\infty}\sqrt[q_n]{B_n}\leq1$
such that
\[|\eta_n|\leq B_n\cdot|\epsilon_n|\ {\rm and}\ |k_n(z)|\leq B_n\ {\rm for\ all}\ z\in\mathbb{D}_{r_0}.\]
By (a), $\pi_{n,j}(Z)\in\mathbb{D}_{r_0}$ and hence
\begin{flalign*}
\frac{f_n^{\comp q_n}(\pi_{n,j}(Z))-\pi_{n,j}(Z)}{\pi'_{n,j}(Z)}+\pi_{n,j}(Z)=&\pi_{n,j}(Z)+1+\eta_n+(\epsilon_n-\pi_{n,j}(Z)^{q_n})k_n(\pi_{n,j}(Z))\\
=&\pi_{n,j}(Z)+1+o(1)\\
\in&\mathbb{B}(\pi_{n,j}(Z),1/(8\pi q_n^2r'^{q_n})).
\end{flalign*}
Then there exists $W\in\mathbb{B}(Z,1/(2\pi q_n^2r'^{q_n}))$ such that
$$g(W)=\frac{f_n^{\comp q_n}(\pi_{n,j}(Z))-\pi_{n,j}(Z)}{\pi'_{n,j}(Z)}+\pi_{n,j}(Z)$$
and
$|W-Z-1|=o(1)$ as $n\to\infty$, independent of $Z$.
This implies that
$f_n^{\comp q_n}(\pi_{n,j}(Z))=\pi_{n,j}(W)$ and
$|W-Z-1|<\frac{1}{4}$.
We take $F_n(Z)=W$, then
$\pi_{n,j}(F_n(Z))=f_n^{\comp q_n}(\pi_{n,j}(Z))$ and $|F_n(Z)-Z-1|<\frac{1}{4}$.
Thus (b) holds.
\end{proof}

Given $1/A<r_1<r<r'<r_0$, for large enough $n$, by Lemma \ref{L3.1}
for all $Z\in Q_n(1/(2\pi q_n^2r^{q_n}))$ and each $j\in\{1,2,\cdots,q_n\}$,
there exists a complex number $F_n(Z)\in \mathbb{B}(Z,1/(2\pi q_n^2r'^{q_n}))$
such that
$$\pi_{n,j}(F_n(Z))=f_n^{\comp q_n}(\pi_{n,j}(Z))\ {\rm and}\
|F_n(Z)-Z-1|<\frac{1}{4}.$$
From now on, instead of using $\pi_{n,j}$, we use $\pi_n$ to represent any $\pi_{n,j}$.
It is easy to see that if $\pi_{n}(Z)=\pi_{n}(Z')$ with $Z\not=Z'$,
then $|Z-Z'|\geq 1/(q_n^2\epsilon_n)$.
Then it follows from $|F_n(Z)-Z-1|<\frac{1}{4}$ that for large enough $n$,
$F_n(Z)$ is uniquely determined.
Thus for large enough $n$, $F_n:Q_n(1/(2\pi q_n^2r^{q_n}))\to\mathbb{C},\ Z\mapsto F_n(Z)$
is a function with
$$\pi_{n}(F_n(Z))=f_n^{\comp q_n}(\pi_{n}(Z))\ {\rm and}\
|F_n(Z)-Z-1|<\frac{1}{4}.$$
It follows from $\pi_{n}(F_n(Z))=f_n^{\comp q_n}(\pi_{n}(Z))$ that
$F_n$ is holomorphic.
Since $0<r_1<r$ and for large enough $n$,
$|F_n(Z)-Z-1|<\frac{1}{4}$ for all $Z\in Q_n(1/(2\pi q_n^2r^{q_n}))$,
we have that for large enough $n$,
$$|F'_n(Z)-1|<\frac{1}{4}\ {\rm for\ all}\ Z\in Q_n(1/(2\pi q_n^2r_1^{q_n})).$$
Thus $F_n$ is a holomorphic function on $Q_n(1/(2\pi q_n^2r_1^{q_n}))$
with
$$\pi_{n}(F_n(Z))=f_n^{\comp q_n}(\pi_{n}(Z)),\
|F_n(Z)-Z-1|<\frac{1}{4}\ {\rm and}\
|F'_n(Z)-1|<\frac{1}{4}.$$
By [Proposition A.2.1 and Lemma A.2.4 (i), \cite{Shi98}] we have the following (c), (d) and (e):
for large enough $n$,
\begin{itemize}
\item[(c)] $F_n$ is univalent;
\item[(d)] there exists a univalent function $\Phi_n$, defined on $Q_n(a_n)$, such that
$$\Phi_n\comp F_n(Z)=\Phi_n(Z)+1,\ Z\in Q_n(a_n)\cap F_n^{-1}(Q_n(a_n)),$$
$${\rm Im}(\Phi_n(Z))\to+\infty,\ {\rm as}\ Z\in Q_n(a_n)\ {\rm and}\ {\rm Im}(Z)\to+\infty$$
and
$${\rm Im}(\Phi_n(Z))\to-\infty,\ {\rm as}\ Z\in Q_n(a_n)\ {\rm and}\ {\rm Im}(Z)\to-\infty,$$
where $a_n=1/(2\pi q_n^2r_1^{q_n})$;
\item[(e)] let $Z_0\in Q_n(a_n)$ with ${\rm Re}(Z_0)=-\frac{1}{\pi q_n^2r_1^{q_n}}$. Denote
by $S$ the closed region (a strip) bounded by the two curves $l:=\{Z_0+iY: Y\in\mathbb{R}\}$
and $F_n(l)$. Then for any $Z\in Q_n(a_n)$, there exists a unique $m\in\mathbb{Z}$ such that $F_n^{\comp m}(Z)$ is
defined and belongs to $S\setminus F_n(l)$.
Moreover, $\exp\comp\Phi_n$ induces an isomorphism between $S/$$\scriptstyle(Z\sim F_n(Z))$
and $\mathbb{C}^*:=\mathbb{C}\setminus\{0\}$, where $\exp(Z):=e^{2\pi iZ}$.
\end{itemize}
The map $\Phi_n$ is called a {\bf perturbed Fatou coordinate} of $F_n$ and is determined uniquely by $f_n$
up to a translation. We set
$$\Phi_n(-\frac{1}{\pi q_n^2r_1^{q_n}})=
-\frac{1}{\pi q_n^2r_1^{q_n}}.$$

Next, we will extend $F_n$ and $\Phi_n$ to a bigger region than $Q_n(a_n)$.
Given $1/A<r_3<r'_2<r_2<r_1$, set
$$\mathbb{K}_n(r_2,r_3):=\left\{Z=X+Yi\in\mathbb{C}:
-\frac{1}{q_n^2\epsilon_n}-\frac{1}{2\pi q_n^2r_3^{q_n}}\leq X\leq\frac{1}{2\pi q_n^2r_3^{q_n}}, Y\geq\frac{1}{2\pi q_n^2r_2^{q_n}}\right\}$$
and
$$QB_n:=Q_n(a_n)\cup\mathbb{K}_n(r_2,r_3).$$
An immediate computation gives
$\pi_n(QB_n)\subset\mathbb{D}_{r_0}$ for large enough $n$.
The following result tells us that $F_n$ can be univalently extended to $QB_n$.
\begin{lemma}
\label{L3}
For large enough $n$,
$F_n$ can be univalently extended to $QB_n$
and for all $Z\in QB_n$, we have
$$\pi_n\comp F_n(Z)=f_n^{\comp q_n}\comp\pi_n(Z)\ {\rm and}\ |F_n(Z)-Z-1|<1/4.$$
Moreover, as long as $n$ is large enough, for all $Z\in\mathbb{K}_n(r_2',r_3)$ {\rm(}defined in the same way as $\mathbb{K}_n(r_2,r_3)${\rm)},
there exists an integer $m\geq0$ such that at least one of the following two conditions holds:
\begin{itemize}
\item[$\bullet$] $F_n^{\comp k}(Z)\in QB_n$ for $0\leq k\leq m$
and $F_n^{\comp m}(Z)\in Q_n(a_n)$;
\item[$\bullet$] $F_n^{\comp -k}(Z)\in QB_n$ for $0\leq k\leq m$
and $F_n^{\comp -m}(Z)\in Q_n(a_n)$.
\end{itemize}
\end{lemma}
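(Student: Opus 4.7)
\emph{Proof plan.} The strategy is to extend $F_n$ from $Q_n(a_n)$ onto the strip $\mathbb{K}_n(r_2,r_3)$ by rerunning the Koebe-plus-Ch\'eritat expansion argument of Lemma \ref{L3.1} at points with large imaginary part, and then to deduce the orbit behavior from the fact that $F_n$ is a small perturbation of $Z\mapsto Z+1$. For the extension, I would observe that the branch locus of $\pi_n$ (the integer shifts $k/(q_n^2\epsilon_n)$, $k\in\mathbb{Z}$, where $1-e^{-2\pi iq_n^2\epsilon_n Z}=0$) lies on the real axis, while every $Z\in\mathbb{K}_n(r_2,r_3)$ satisfies $\mathrm{Im}(Z)\geq 1/(2\pi q_n^2 r_2^{q_n})$. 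For any radius $\rho$ strictly smaller than this height, the ball $\mathbb{B}(Z,\rho)$ avoids the branch locus, so $\pi_n$ is univalent on it, and a direct computation using $|e^{-2\pi iq_n^2\epsilon_n Z}|=e^{2\pi q_n^2\epsilon_n\,\mathrm{Im}(Z)}$ shows $|\pi_n(W)|\leq r_0$ for all $W\in\mathbb{B}(Z,\rho)$ and all large $n$. Since $r_2<1$, for $n$ large one may take $\rho>4$, and the Koebe $1/4$-distortion theorem together with the Ch\'eritat expansion
$$f_n^{\comp q_n}(z)=z+2\pi iq_nz(\epsilon_n-z^{q_n})\bigl(1+\eta_n+(\epsilon_n-z^{q_n})k_n(z)\bigr)$$
from [Corollary $2$, Lemma $3$, BC12] then produces a unique $F_n(Z)\in\mathbb{B}(Z,\rho)$ with $\pi_n(F_n(Z))=f_n^{\comp q_n}(\pi_n(Z))$ and $|F_n(Z)-Z-1|<1/4$, exactly as in Lemma \ref{L3.1}. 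The $1/4$-uniqueness forces agreement with the old $F_n$ on the overlap $Q_n(a_n)\cap\mathbb{K}_n(r_2,r_3)$; holomorphy on $QB_n$ comes from the functional equation; and the Cauchy bound $|F_n'(Z)-1|<1/4$ combined with the fact that $F_n$ is close to $Z\mapsto Z+1$ yields global univalence on $QB_n$ by the standard monodromy argument.

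For the ``moreover'' statement, the expansion above actually gives $|F_n(Z)-Z-1|=o(1)$ as $n\to\infty$, with the error dominated by $B_n(|\epsilon_n|+r_2^{q_n})$. Let $Z\in\mathbb{K}_n(r_2',r_3)$. If $Z\in Q_n(a_n)$, take $m=0$; otherwise $Z$ lies in one of the two regions of $\mathbb{K}_n(r_2,r_3)$ outside $Q_n(a_n)$. Without loss of generality assume $Z$ lies in the right region, i.e.\ $\mathrm{Re}(Z+a_n)\geq|\mathrm{Im}(Z+a_n)|$. I then iterate $F_n^{-1}$ backward: each step decreases $\mathrm{Re}$ by an amount in $(3/4,5/4)$ while changing $\mathrm{Im}$ by only $o(1)$, so after at most $O\bigl((q_n^2 r_3^{q_n})^{-1}\bigr)$ iterations the orbit crosses the cone $\mathrm{Re}(W+a_n)=|\mathrm{Im}(W+a_n)|$ and enters $Q_n(a_n)$, giving the required $m$. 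Throughout, every intermediate iterate stays in $\mathbb{K}_n(r_2,r_3)\subset QB_n$: horizontally the near-translation keeps $\mathrm{Re}$ within the strip's $X$-range, and vertically the buffer $(r_2'^{-q_n}-r_2^{-q_n})/(2\pi q_n^2)$ provided by the strict inequality $r_2'<r_2$ absorbs the cumulative vertical drift.

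The main obstacle I anticipate is the quantitative balance between the cumulative drift and the vertical buffer: the drift accumulated over $O((q_n^2 r_3^{q_n})^{-1})$ iterations is of order $B_n r_2^{q_n}(q_n^2 r_3^{q_n})^{-1}$, and this must be dominated by $(r_2'^{-q_n}-r_2^{-q_n})/(2\pi q_n^2)$, which constrains how $r_2'$ must be chosen in terms of $r_2$ and $r_3$. Once this balance is arranged, the rest of the argument is essentially routine, thanks to the clean separation between the strip $\mathbb{K}_n(r_2,r_3)$ and the real-axis branch locus of $\pi_n$.
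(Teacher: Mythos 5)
Your framework — extend $F_n$ to $\mathbb{K}_n(r_2,r_3)$ and then propagate the orbit of $Z\in\mathbb{K}_n(r_2',r_3)$ horizontally while controlling the vertical drift against the buffer in imaginary part — is the same framework the paper uses. The extension and univalence part of your plan is sound (the paper invokes Propositions 8 and 9 of Buff--Ch\'eritat rather than re-running the Koebe argument of Lemma \ref{L3.1}, and gets univalence by a deck-transformation argument, but your route is also workable). The genuine gap is in the drift estimate for the ``moreover'' part, and it is not of the kind that ``choosing $r_2'$ more carefully'' can fix.

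You bound the per-step vertical drift uniformly by $O\bigl(B_n(|\epsilon_n|+r_2^{q_n})\bigr)$ and multiply by the orbit length $O\bigl((q_n^2 r_3^{q_n})^{-1}\bigr)$, giving a cumulative drift of order $B_n(r_2/r_3)^{q_n}/q_n^2$, which you then compare to the buffer $\sim r_2'^{-q_n}/(2\pi q_n^2)$. This forces the constraint $B_n\lesssim(r_3/(r_2 r_2'))^{q_n}$, which can only hold for all large $n$ when $r_3\geq r_2 r_2'$. But the proposition ultimately needs to cover open sets $U$ in annuli $H(r_7,r_8)$ with $r_8$ taken arbitrarily close to $1$, which (through the chain $r_8<r_6<r_4<r_2'<r_2$) forces $r_2'$ and $r_2$ arbitrarily close to $1$, hence $r_2 r_2'\to1>r_3$ and your constraint becomes impossible. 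So the uniform-per-step-times-number-of-steps bookkeeping is exponentially too lossy and cannot be salvaged by a clever choice of radii. What actually happens — and what the paper exploits, via the estimates in the proofs of Lemmas 4 and 5 of Buff--Ch\'eritat — is that the drift rate $u_n(X)=\dfrac{7B_n\epsilon_n}{|(1+\epsilon_n/r_2^{q_n})e^{2\pi iq_n^2\epsilon_n X}-1|}$ is strongly position-dependent: it is of order $B_n r_2^{q_n}$ only near the two ends of the strip (where $e^{2\pi iq_n^2\epsilon_n X}\approx 1$), and drops to $O(B_n\epsilon_n)$ in the interior. Integrating $u_n$ across the strip yields, after the substitution carried out in the paper, $\dfrac{7B_n}{q_n^2}\int_0^{4r_2^{q_n}/(\pi r_3^{q_n})}\dfrac{dt}{\sqrt{1+t^2}}\leq\dfrac{7B_n}{q_n^2}\bigl(1+\log\tfrac{4r_2^{q_n}}{\pi r_3^{q_n}}\bigr)=O(B_n/q_n)$, which is sub-exponential and therefore negligible against the exponentially large buffer $\sim r_2'^{-q_n}/(2\pi q_n^2)$ for \emph{any} admissible choice of $r_3<r_2'<r_2$. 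This logarithmic improvement from $O((r_2/r_3)^{q_n})$ to $O(q_n)$ is the crucial point; without it the lemma does not go through in the generality needed.
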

\begin{proof}
Set
$$\mathbb{H}_n(r_1):=\{Z\in\mathbb{C}:{\rm Im}(Z)>\tau_n(r_1)\}$$
with
\[\tau_n(r_1):=\frac{1}{2\pi q_n^2\epsilon_n}\log(1+\frac{\epsilon_n}{r_1^{q_n}})\stackrel{n\to\infty}{\sim}\frac{1}{2\pi q_n^2r_1^{q_n}}.\]
It follows from a computation that for large enough $n$,
$\pi_n(\mathbb{H}_n(r_1))\subset\mathbb{D}_{r_0}$.
By [Propositions 8 and 9, \cite{BC12}], if $n$ is large enough,
there exists a holomorphic map $F_n:\mathbb{H}_n(r_1)\to\mathbb{H}^+$ such that
$$\pi_{n}\comp F_n(Z)=f_n^{\comp q_n}\comp\pi_{n}(Z)\ {\rm and}\
\sup_{Z\in\mathbb{H}_n(r_1)}\{|F_n(Z)-Z-1|\}\to0\ {\rm as}\ n\to\infty.$$
This implies that for large enough $n$,
$F_n$ can extend to $\mathbb{H}_n(r_1)\cup Q_n(a_n)$ with
\begin{equation}
\label{F3.1}\pi_{n}\comp F_n(Z)=f_n^{\comp q_n}\comp\pi_{n}(Z)\ {\rm and}\ |F_n(Z)-Z-1|<1/4.
\end{equation}
Next, we assume $n$ is large enough.
Since the restriction of $F_n$ to $\mathbb{H}_n(r_1)$ is a lift of $f_n^{\comp q_n}$ through $\pi_n$
and $f_n^{\comp q_n}$ fixing $0$ is inverse, we have that
$F_n$ is univalent on $\mathbb{H}_n(r_1)$.
If $Z,Z'\in\mathbb{H}_n(r_1)\cup Q_n(a_n)$ with $Z\not=Z'$
and $F_n(Z)=F_n(Z')$, then
$$f_n^{\comp q_n}\comp\pi_{n}(Z)=f_n^{\comp q_n}\comp\pi_{n}(Z')$$
and hence $Z-Z'=\frac{k}{q_n\epsilon_n}$ for some nonzero integer $k$.
Since $F_n$ is univalent on $Q_n(a_n)$, we have $Z,Z'\in\mathbb{H}_n(r_1)$
and this contradicts the fact that $F_n$ is univalent on $\mathbb{H}_n(r_1)$.
Thus $F_n$ is univalent on $\mathbb{H}_n(r_1)\cup Q_n(a_n)$.
Since $QB_n\subset\mathbb{H}_n(r_1)\cup Q_n(a_n)$, we have that
$F_n$ can be univalently extended to $QB_n$ with (\ref{F3.1}).

By [proofs of Lemmas 4 and 5, \cite{BC12}], we have that
for large enough $n$,
for any $Z\in\mathbb{K}_n(r_2',r_3)$ and $F_n^{\comp j}(Z)\in\mathbb{K}_n(r_2,r_3)$ for $j=1,2,\cdots,k$, we have
$$|{\rm Im}(F_n^{\comp k}(Z))-{\rm Im}(Z)|\leq2\int_{{\rm Re}(Z)}^{{\rm Re}(F_n^{\comp k}(Z))}u_n(X){\rm d}X$$
with
\[u_n(X):=\frac{7B_n\epsilon_n}{|(1+\frac{\epsilon_n}{r_2^{q_n}})e^{2\pi iq_n^2\epsilon_n X}-1|}\]
and $\{B_n\}_{n=1}^{\infty}$ is a sequence of positive real numbers with
$$\limsup_{n\to\infty}\sqrt[q_n]{B_n}\leq1.$$
If $${\rm Re}(F_n^{\comp k}(Z))-{\rm Re}(Z)<\frac{1}{\pi q_n^2r_3^{q_n}},$$
then we have
\begin{flalign*}
|{\rm Im}(F_n^{\comp k}(Z))-{\rm Im}(Z)|\leq&2\int_{{\rm Re}(Z)}^{{\rm Re}(F_n^{\comp k}(Z))}u_n(X){\rm d}X\\
\leq&2\int_{{\rm Re}(Z)}^{{\rm Re}(Z)+\frac{1}{\pi q_n^2r_3^{q_n}}}u_n(X){\rm d}X\\
=&14B_n\epsilon_n\int_{{\rm Re}(Z)}^{{\rm Re}(Z)+\frac{1}{\pi q_n^2r_3^{q_n}}}
\frac{1}{|(1+\frac{\epsilon_n}{r_2^{q_n}})e^{2\pi iq_n^2\epsilon_n X}-1|}{\rm d}X\\
=&\frac{7B_n}{\pi q_n^2}\int_{2\pi q_n^2\epsilon_n{\rm Re}(Z)}^{2\pi q_n^2\epsilon_n{\rm Re}(Z)+\frac{2\epsilon_n}{r_3^{q_n}}}
\frac{1}{|(1+\frac{\epsilon_n}{r_2^{q_n}})e^{i\theta}-1|}{\rm d}\theta\\
\leq&\frac{7B_n}{\pi q_n^2}\int_{2\pi q_n^2\epsilon_n{\rm Re}(Z)}^{2\pi q_n^2\epsilon_n{\rm Re}(Z)+\frac{2\epsilon_n}{r_3^{q_n}}}
\frac{1}{\sqrt{(\frac{\epsilon_n}{r_2^{q_n}})^2+(2\sin\frac{\theta}{2})^2}}{\rm d}\theta\\
\leq&\frac{14B_n}{\pi q_n^2}\int_{0}^{\frac{2\epsilon_n}{r_3^{q_n}}}
\frac{1}{\sqrt{(\frac{\epsilon_n}{r_2^{q_n}})^2+(\frac{2\theta}{\pi})^2}}{\rm d}\theta\\
=&\frac{7B_n}{q_n^2}\int_{0}^{\frac{4r_2^{q_n}}{\pi r_3^{q_n}}}
\frac{1}{\sqrt{1+t^2}}{\rm d}t\\
\leq&\frac{7B_n}{q_n^2}(1+\log(\frac{4r_2^{q_n}}{\pi r_3^{q_n}})).
\end{flalign*}
Thus we have\\
(1) For any $Z\in\mathbb{K}_n(r_2',r_3)$ with ${\rm Re}(Z)<a_n-\frac{1}{q_n^2\epsilon_n}$,
if $F_n^{\comp j}(Z)\in\mathbb{K}_n(r_2,r_3)$ for $j=1,2,\cdots,k$ and
${\rm Re}(F_n^{\comp k}(Z))\leq a_n-\frac{1}{q_n^2\epsilon_n}$,
then
$${\rm Re}(F_n^{\comp k}(Z))-{\rm Re}(Z)<\frac{1}{\pi q_n^2r_3^{q_n}}$$
and hence
\begin{flalign*}
{\rm Im}(F_n^{\comp k}(Z))
&\geq{\rm Im}(Z)-\frac{7B_n}{q_n^2}(1+\log(\frac{4r_2^{q_n}}{\pi r_3^{q_n}}))\\
&\geq\frac{1}{2\pi q_n^2r_2'^{q_n}}-\frac{7B_n}{q_n^2}(1+\log(\frac{4r_2^{q_n}}{\pi r_3^{q_n}}))\\
&\stackrel{n\to\infty}{\sim}\frac{1}{2\pi q_n^2r_2'^{q_n}}.
\end{flalign*}
This implies $F_n^{\comp(k+1)}(Z)\in\mathbb{K}_n(r_2,r_3)$.
Thus there exists a positive integer $m$ such that
$F_n^{\comp j}(Z)\in QB_n$ for $j=1,2,\cdots,m$ and $F_n^{\comp m}(Z)\in Q_n(a_n)$.\\
(2) For any $Z\in\mathbb{K}_n(r_2',r_3)$ with $a_n-\frac{1}{q_n^2\epsilon_n}\leq{\rm Re}(Z)\leq-a_n$,
$m=0$ satisfies our requirement.\\
(3) For any $Z\in\mathbb{K}_n(r_2',r_3)$ with $-a_n<{\rm Re}(Z)$,
if $F_n^{\comp -j}(Z)\in\mathbb{K}_n(r_2,r_3)$ for $j=1,2,\cdots,k$ and
${\rm Re}(F_n^{\comp -k}(Z))\geq-a_n$,
then
$${\rm Re}(Z)-{\rm Re}(F_n^{\comp -k}(Z))<\frac{1}{\pi q_n^2r_3^{q_n}}$$
and by the similar estimation used in the case (1) we can deduce
$F_n^{\comp(-k-1)}(Z)\in\mathbb{K}_n(r_2,r_3)$.
Thus there exists a positive integer $m$ such that
$F_n^{\comp-j}(Z)\in QB_n$ for $j=1,2,\cdots,m$ and $F_n^{\comp-m}(Z)\in Q_n(a_n)$.
\end{proof}
By Lemma \ref{L3} and (e), we have
\begin{corollary}
\label{C3.1}For large enough $n$,
$\Phi_n$ can be univalently extended to $QB'_n:=\mathbb{K}_n(r'_2,r_3)\cup Q_n(a_n)$
and for all $Z\in QB'_n\cap F_n^{-1}(QB'_n)$,
$\Phi_n\comp F_n(Z)=\Phi_n(Z)+1.$
Moreover, if $Z,Z'\in QB'_n$ with $\Phi_n(Z')=\Phi_n(Z)+m$, then $Z'=F_n^{\comp m}(Z)$.
\end{corollary}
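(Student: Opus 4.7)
The plan is to extend $\Phi_n$ from $Q_n(a_n)$ to the larger region $QB'_n$ by using the functional equation $\Phi_n\comp F_n = \Phi_n + 1$ together with the dichotomy provided by Lemma \ref{L3}. Concretely, for $Z\in Q_n(a_n)$ the value $\Phi_n(Z)$ is already given by property (d). For $Z\in\mathbb{K}_n(r'_2,r_3)$, Lemma \ref{L3} yields an integer $m\ge 0$ such that either $F_n^{\comp k}(Z)\in QB_n$ for $0\le k\le m$ with $F_n^{\comp m}(Z)\in Q_n(a_n)$ (forward case), or the analogous statement holds with $F_n^{\comp -k}$ (backward case). I will define
\[\Phi_n(Z):=\Phi_n(F_n^{\comp m}(Z))-m\quad\text{or}\quad\Phi_n(Z):=\Phi_n(F_n^{\comp -m}(Z))+m\]
according to which case occurs.

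The first step is to check that the definition does not depend on the choice of $m$: if two different valid choices exist (possibly one forward and one backward), repeated application of the functional equation on $Q_n(a_n)\cap F_n^{-1}(Q_n(a_n))$ from property (d) shows they give the same value. Holomorphy of the extension is immediate from the holomorphy of $F_n$ (Lemma \ref{L3}) and the original $\Phi_n$, and the identity $\Phi_n\comp F_n(Z)=\Phi_n(Z)+1$ on $QB'_n\cap F_n^{-1}(QB'_n)$ is built into the construction.

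The main obstacle is establishing univalence on the enlarged domain $QB'_n$. Suppose $Z,Z'\in QB'_n$ satisfy $\Phi_n(Z)=\Phi_n(Z')$. The idea is to iterate: by Lemma \ref{L3}, one can choose integers $m,m'$ (with appropriate signs) so that $F_n^{\comp m}(Z)$ and $F_n^{\comp m'}(Z')$ lie in $Q_n(a_n)$, and the functional equation then transports the equality of $\Phi_n$ values to $Q_n(a_n)$. Invoking property (e), in which $\exp\comp\Phi_n$ induces an isomorphism of $S/{\scriptstyle(Z\sim F_n(Z))}$ onto $\mathbb{C}^*$, forces the two resulting points in $Q_n(a_n)$ to be related by an iterate of $F_n$. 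Combined with univalence of $F_n$ on $QB_n$ (Lemma \ref{L3}), this yields $Z=Z'$.

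Finally, the moreover part follows from the same machinery: if $\Phi_n(Z')=\Phi_n(Z)+m$, applying Lemma \ref{L3} repeatedly one may assume $Z\in Q_n(a_n)$, in which case $\Phi_n(F_n^{\comp m}(Z))=\Phi_n(Z)+m=\Phi_n(Z')$, and univalence of the extended $\Phi_n$ just established forces $F_n^{\comp m}(Z)=Z'$. The delicate point throughout is bookkeeping on which iterates of $F_n$ remain inside $QB_n$ so that the chain of functional equations is legitimate; this is precisely what the invariance statement in Lemma \ref{L3} is designed to guarantee.
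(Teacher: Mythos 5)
Your argument is essentially what the paper leaves implicit when it writes ``By Lemma \ref{L3} and (e)'': use Lemma \ref{L3}'s forward/backward orbits to transport any $Z\in\mathbb{K}_n(r'_2,r_3)$ into $Q_n(a_n)$, then invoke property (e)'s isomorphism $\exp\comp\Phi_n\colon S/{\scriptstyle(Z\sim F_n(Z))}\to\mathbb{C}^*$ for both univalence and the orbit relation --- so the approach matches the paper's. The only soft spot is the \emph{moreover} step, where you write $\Phi_n(F_n^{\comp m}(Z))=\Phi_n(Z)+m$ before establishing that $F_n^{\comp m}(Z)$ exists and lies in the domain; transporting both $Z$ and $Z'$ into $Q_n(a_n)$ first, using (e) there to conclude that the two $Q_n(a_n)$-representatives differ by the appropriate $F_n$-iterate, and then pulling that relation back along the Lemma \ref{L3}-controlled orbits avoids the issue, but this is a reordering of the bookkeeping you already flag, not a missing idea.
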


\subsection{\bf The renormalization associated to perturbed Fatou coordinate}
Set $Z_n:=X_n+iY_n$,
where $-1/(2\pi q_n^2r_5^{q_n})<X_n<1/(2\pi q_n^2r_5^{q_n})$ and $Y_n=1/(2\pi q_n^2r_4^{q_n})$
with $r_3<r_5<r_4<r'_2$.
Let
$$\mathcal{L}(Z_n)=\{Z=X+Yi\in\mathbb{C}:\ X=X_n,\ Y\geq Y_n\}$$
and
$$[Z_n,F_n(Z_n)]=\{tZ_n+(1-t)F_n(Z_n): 0\leq t\leq 1\}.$$
For large enough $n$,
the union
$$\mathcal{L}(Z_n)\cup[Z_n,F_n(Z_n)]\cup F_n(\mathcal{L}(Z_n))\cup\{\infty\}$$
forms a Jordan curve on the Riemann sphere. By $\mathcal{H}_n(Z_n)$ we denote
the closure (in $\mathbb{C}$) of the region bounded by this Jordan curve and not containing $0$.

To define the renormalization, we need to lift $f_n^{\comp q_{n-1}}$ to the $Z$-coordinate through $\pi_n$.
Recall
$$\mathbb{H}_n(r_1)=\{Z\in\mathbb{C}:{\rm Im}(Z)>\tau_n(r_1)\}$$
with
\[\tau_n(r_1)=\frac{1}{2\pi q_n^2\epsilon_n}\log(1+\frac{\epsilon_n}{r_1^{q_n}})\stackrel{n\to\infty}{\sim}\frac{1}{2\pi q_n^2r_1^{q_n}}.\]
It follows from [Proposition 8, \cite{BC12}] that
there exists a holomorphic map
$G_n:\mathbb{H}_n(r_1)\to\mathbb{H}^+$ if $n$ is large enough, such that
\begin{itemize}
\item[$\bullet$] The map $G_n$ is semi-conjugated to $f_n^{\comp q_{n-1}}$ by $\pi_n$:
\[\pi_n\comp G_n=f_n^{\comp q_{n-1}}\comp\pi_n.\]
\item[$\bullet$] As ${\rm Im}(Z)\to+\infty$, we have
\[G_n(Z)=Z-(A_n+\theta)+o(1).\]
\item[$\bullet$] The sequence
\[\sup_{Z\in\mathbb{H}_n(r_1)}\left|G_n(Z)-Z+A_n+\theta\right|\]
are sub-exponential with respect to $q_n$, that is,
\[\limsup_{n\to\infty}\sqrt[q_n]{\sup_{Z\in\mathbb{H}_n(r_1)}\left|G_n(Z)-Z+A_n+\theta\right|}\leq1.\]
\end{itemize}
It is not hard to check that for large enough $n$,
$F_n\comp G_n(Z)=G_n\comp F_n(Z)$ for all $Z\in\mathbb{H}_n(r_2)$.
Next, we assume that $n$ is large enough.
Since
$$A_n+\theta=\frac{1}{q_n^2\epsilon_n}-\frac{q_{n-1}}{q_n},$$
we have that for all $Z\in\mathcal{H}_n(Z_n)$, $G_n(Z)\in QB'_n$. Then
the {\bf renormalized map} is defined by
$$\mathcal{R}(f_n)(z)=\exp\comp\Phi_n\comp G_n\comp\Phi_n^{-1}\comp\exp^{-1}(z),\ z\in\exp(\Phi_n(\mathcal{H}_n(Z_n))),$$
where $\exp^{-1}(z)\in\Phi_n(\mathcal{H}_n(Z_n))$.
Combining $F_n\comp G_n(Z)=G_n\comp F_n(Z)$ with Corollary \ref{C3.1},
one can check that the renormalized map is well defined and holomorphic on
$\exp(\Phi_n(\mathcal{H}_n(Z_n)))$, isomorphic to a punctured disk, and
the origin is a removable singularity of $\mathcal{R}(f_n)$.
Since $G_n$ is a lift of $f_n^{\comp q_{n-1}}$ through $\pi_n$
and $f_n^{\comp q_{n-1}}$ fixing $0$ is inverse, we have that
$G_n$ is univalent on $\mathbb{H}_n(r_1)$.
Then it is not hard to check that $\mathcal{R}(f_n)$ is
a univalent map fixing the origin defined on $\exp(\Phi_n(\mathcal{H}_n(Z_n)))\cup\{0\}$.
We observe that for large enough $n$,
$F_n(Z)=Z+1+o(1)$ as $Z\in QB_n$ and ${\rm Im}(Z)\to+\infty$.
Then by [Lemma A.2.4 (i), \cite{Shi98}] we have
$|\Phi'_n(Z)-1|\to0$ as $Z\in\mathbb{K}_n(r'_2,r_3)$ and ${\rm Im}(Z)\to+\infty$.
Thus
$$\Phi_n(G_n(\Phi^{-1}_n(Z)))-Z=\int_{\Phi^{-1}_n(Z)}^{G_n(\Phi^{-1}_n(Z))}\Phi'_n(z){\rm d}z\to-(A_n+\theta)$$
as $Z\in\Phi_n(\mathcal{H}_n(Z_n))$ and ${\rm Im}(Z)\to+\infty$.
This implies $\mathcal{R}(f_n)'(0)=e^{-2\pi i\theta}$.
Let $\rho_n$ be a positive real number such that
$\mathbb{D}_{\rho_n}$ is the largest disk contained completely in $\exp(\Phi_n(\mathcal{H}_n(Z_n)))\cup\{0\}$.
According to the theorem on page $21$ in \cite{Yoc95}, there exists a constant $c>0$ depending only on $\theta$ such that
$\mathbb{D}_{c\rho_n}$ is contained in
the Siegel disk of $\mathcal{R}(f_n)$ centering at the origin.

\subsection{\bf Iterating the commuting pair $(F_n,G_n)$ through the renormalization\label{s3.3}}
Set
$$\mathbb{H}_n(r_6,r_5):=\{Z=X+Yi\in\mathbb{C}:|X|<\frac{1}{2\pi q_n^2r_5^{q_n}},\ Y>\frac{1}{2\pi q_n^2r_6^{q_n}}\}$$
with $r_5<r_6<r_4$. Assume that $n$ is large enough.
For all $Z=X+iY\in\mathbb{H}_n(r_6,r_5)$,
we have
$Z\in\mathcal{H}_n(Z_n)$,
where $Z_n=X_n+iY_n$ with
$X_n=X$ and $Y_n=1/(2\pi q_n^2r_4^{q_n})$.
Set $z:=\exp(\Phi_n(Z))$. Similar to $(b)$ of Step $3$ in the proof of [Proposition 10, \cite{BC12}],
one can prove that there exists an annulus $\mathcal{A}_n(Z)\subset\mathbb{C}$ such that
\begin{itemize}
\item[$\bullet$] the annulus $\mathcal{A}_n(Z)$ separates $0$ and $z$ from $\infty$ and a point of modulus $\rho_n$ in
$\partial\exp\comp\Phi_n(\mathcal{H}_n(Z_n))$;
\item[$\bullet$] the conformal modulus of $\mathcal{A}_n(Z)$ converges uniformly to $+\infty$ on
$\mathbb{H}_n(r_6,r_5)$ as $n\to\infty$.
\end{itemize}
This implies that $|\frac{z}{\rho_n}|$ converges uniformly to $0$ on $\mathbb{H}_n(r_6,r_5)$ as $n\to\infty$.
Thus $z$ is contained in $\mathbb{D}_{c\rho_n}$.
Then $z$ is contained in
the Siegel disk of $\mathcal{R}(f_n)$ centering at the origin
and hence
$$\{\mathcal{R}(f_n)^{\comp m}(z)\}_{m=0}^{+\infty}\subset\exp(\Phi_n(\mathcal{H}_n(Z_n))).$$
Then it follows from Corollary \ref{C3.1} that
there exists a sequence $\{n_j\}_{j=1}^{+\infty}$ of positive integers such that
for all $t\geq0$ and $0\leq s<n_{t+1}$, we have
$$F_n^{\comp s}\comp G_n\comp F_n^{\comp n_t}\comp\cdots\comp G_n\comp F_n^{\comp n_1}\comp G_n(Z)\in QB_n.$$
Thus for all $t\geq0$ and $0\leq s<n_{t+1}$,
$$f_n^{\comp(sq_n)}\comp f_n^{\comp q_{n-1}}\comp f_n^{\comp(n_tq_n)}\comp\cdots\comp
f_n^{\comp q_{n-1}}\comp f_n^{\comp(n_1q_n)}\comp f_n^{\comp q_{n-1}}(\pi_n(Z))\in\pi_n(QB_n)\subset\mathbb{D}_{r_0}.$$
This implies that for all $l\geq0$,
$f_n^{\comp l}(\pi_n(Z))\in\mathbb{D}_{r'_0}$ and
hence the set $\pi_n(\mathbb{H}_n(r_6,r_5))$
is contained in the Siegel disk $\Delta_n(r'_0)$ of $f_n$ centering at the origin.

\subsection{\bf liminf of the area rate of perturbed Siegel disks}
We give the following two notations:
\begin{itemize}
\item[$\bullet$] $\mathcal{X}_n(r_8):=\{z\in\mathbb{C}:\frac{z^{q_n}}{z^{q_n}-\epsilon_n}\in\mathbb{D}_{s_n}\}$
with $r_5<r_8<r_6$ and $s_n=\frac{r_8^{q_n}}{r_8^{q_n}+\epsilon_n}$;
\item[$\bullet$] $\mathcal{Y}_n(r_8,r_7):=\mathcal{X}_n(r_8)\setminus\mathbb{D}_{r_7}$ with $r_5<r_7<r_8$.
\end{itemize}
About those notations, we have the following properties:
\begin{itemize}
\item[(1)] for any nonempty open subset $U\subset\mathbb{H}(r_7,r_8)$,
$\liminf_{n\to+\infty}{\rm dens}_{U}\mathcal{Y}_n(r_8,r_7)\geq\frac{1}{2}$;
\item[(2)] for large enough $n$, $\mathcal{Y}_n(r_8,r_7)$ is contained in
the union of $q_n$ branches of $\pi_n(\mathbb{H}_n(r_6,r_5))$.
\end{itemize}
\begin{proof}
(1): For all $r_7<t<r_8$ and $\theta_1<\theta_2$, set
$$l(t,\theta_1,\theta_2):=\{z\in\mathbb{C}:|z|=t\ {\rm and}\ \theta_1<\arg(z)<\theta_2\}.$$
Let $T_{n,1}(z):=z^{q_n}$ and $T_{n,2}(z):=\frac{z}{z-\epsilon_n}$. Then
$$T_{n,2}^{-1}(\mathbb{D}_{s_n})=
\{z\in\mathbb{C}:|z+\frac{r_8^{2q_n}}{2r_8^{q_n}+\epsilon_n}|<
\frac{r_8^{q_n}(r_8^{q_n}+\epsilon_n)}{2r_8^{q_n}+\epsilon_n}\}$$
and hence $\mathbb{B}(-\frac{r_8^{q_n}}{2},\frac{r_8^{q_n}}{2})\subset T_{n,2}^{-1}(\mathbb{D}_{s_n})$.
Since for all $\theta$ and all $r_7<t<r_8$,
$T_{n,1}(l(t,\theta,\theta+\frac{2\pi}{q_n}))=\partial\mathbb{D}_{t^{q_n}}$ and
the length of $\partial\mathbb{D}_{t^{q_n}}\cap\mathbb{B}(-\frac{r_8^{q_n}}{2},\frac{r_8^{q_n}}{2})$
is $t^{q_n}(\pi-\frac{2t^{q_n}}{r_8^{q_n}}+o(\frac{t^{q_n}}{r_8^{q_n}}))$ as $n\to\infty$,
we have that the length of $l(t,\theta,\theta+\frac{2\pi}{q_n})\cap\mathcal{Y}_n(r_8,r_7)$
is at least
$$\frac{2\pi}{q_n}\frac{\pi-\frac{2t^{q_n}}{r_8^{q_n}}+o(\frac{t^{q_n}}{r_8^{q_n}})}{2\pi}t\ {\rm as}\ n\to\infty.$$
Thus for all $\theta_1<\theta_2$ with $\theta_2-\theta_1\leq2\pi$ and all $r_7<t<r_8$,
the length of $l(t,\theta_1,\theta_2)\cap\mathcal{Y}_n(r_8,r_7)$ is at least
$$(\theta_2-\theta_1)t/2+o(1)+O(\frac{t^{q_n}}{r_8^{q_n}})\ {\rm as}\ n\to\infty.$$
This implies that
for any nonempty open subset $U$ compactly contained in $\mathbb{H}(r_7,r_8)$,
the area of $U\cap\mathcal{Y}_n(r_8,r_7)$ is at least ${\rm area}(U)/2+o(1)$ as $n\to\infty$.
Observe that any nonempty open subset of $\mathbb{H}(r_7,r_8)$ could be approximated by
nonempty open sets compactly contained in $\mathbb{H}(r_7,r_8)$.
Thus (1) holds.

(2): Assume that $n$ is large enough.
For all $z\in\mathcal{Y}_n(r_8,r_7)$, set $w:=\frac{z^{q_n}}{z^{q_n}-\epsilon_n}$ with $w=x+yi$.
Since $z\in\mathcal{X}_n(r_8)$, we have $w\in\mathbb{D}_{s_n}$, that is
\begin{equation}
\label{F4.1}x^2+y^2\leq s_n^2.
\end{equation}
Since $z\not\in\mathbb{D}_{r_7}$, an elementary computation gives
\begin{equation}
\label{F4.2}(x-\frac{r_7^{2q_n}}{r_7^{2q_n}-\epsilon_n^2})^2+y^2\leq(\frac{\epsilon_nr_7^{q_n}}{r_7^{2q_n}-\epsilon_n^2})^2.
\end{equation}
Combining (\ref{F4.1}) with (\ref{F4.2}), one can obtain
$$\frac{r_7^{q_n}}{r_7^{q_n}+\epsilon_n}<|w|<\frac{r_8^{q_n}}{r_8^{q_n}+\epsilon_n}$$
and
$$|\arg(w)|\leq\frac{\epsilon_n}{r_7^{q_n}}+{\rm o}(\frac{\epsilon_n}{r_7^{q_n}})\
{\rm as}\ n\to+\infty.$$
Since $n$ is large enough, we have that
there exists $W\in\mathbb{H}_n(r_6,r_5)$ such that $w=e^{2\pi iq_n^2\epsilon_nW}$,
that is $z^{q_n}=(\pi_n(W))^{q_n}$.
This implies that $z$ is contained in some branch of $\pi_n(\mathbb{H}_n(r_6,r_5))$.
Thus (2) holds.
\end{proof}
It follows from (2) and Subsection \ref{s3.3} that
for large enough $n$, $\mathcal{Y}_n(r_8,r_7)$ is contained in $\Delta_n(r'_0)$.
Then by (1) we have that for any nonempty open subset $U\subset\mathbb{H}(r_7,r_8)$,
\begin{flalign*}
\liminf_{n\to+\infty}{\rm dens}_U\Delta_n(r'_0)
\geq\liminf_{n\to+\infty}{\rm dens}_U\mathcal{Y}_n(r_8,r_7)
\geq\frac{1}{2}.
\end{flalign*}
Thus by the arbitrariness of $r_j$ ($j=0,1,\cdots,8$), $r$, $r'$, $r'_2$ and $r'_0$, we have that
for any nonempty open set $U\subset H(1/A,r_8)$ with any $1/A<r_8<r'_0<1$,
\begin{equation*}
\liminf_{n\to+\infty}{\rm dens}_U\Delta_n(r'_0)\geq\frac{1}{2}.
\end{equation*}
The proof of Proposition \ref{P1} is complete.

\end{document}